\DeclareMathAlphabet{\pazocal}{OMS}{zplm}{m}{n}
\newtheorem{thm}{Theorem}[section]
\newtheorem{lem}[thm]{Lemma}
\newtheorem{prop}[thm]{Proposition}
\newtheorem{cor}[thm]{Corollary}
\newtheorem{defn}[thm]{Definition}
\newtheorem{rem}[thm]{Remark}
\newtheorem{example}[thm]{Example}
\begin{document}

\title[(Strongly) $M-\pazocal{A}-$Injective(Flat) Modules ]
{(Strongly) $M-\pazocal{A}-$Injective (Flat) Modules}

\author{Tah\.{i}re \" Ozen}
\address{Department of Mathematics, Abant \.{I}zzet Baysal University
\newline
G\"olk\"oy Kamp\"us\"u Bolu, Turkey} \email{ozen\_t@ibu.edu.tr}

\date{\today}
\subjclass{16D40, 16D50, 16P70}
\keywords{$\pazocal{A}-\mathrm{injective}$, $\pazocal{A}-\mathrm{flat}$, $\pazocal{A}$-coherent module, strongly injective and flat, $FS$ and $PS$ module, precover, preenvelope.}

\pagenumbering{arabic}

\begin{abstract}Let $M$ be a left $R-$module and $\pazocal{A}=\{A\}_{A\in\pazocal{A}}$ be a family of some submodules of $M$. It is introduced the classes of (strongly) $M-\pazocal{A}-\mathrm{injective}$ and (strongly) $M-\pazocal{A}-\mathrm{flat}$ modules which are denoted by $(S) M-\pazocal{A}I$ and $(S) M-\pazocal{A}F$, respectively. It is obtained some characterizations of these classes and the relationships between these classes. Moreover it is investigated $(S) M-\pazocal{A}I$ and $(S) M-\pazocal{A}F$ precovers and preenvelopes of modules. It is also studied $\pazocal{A}$-coherent, $F\pazocal{A}$ and $P\pazocal{A}$ modules. Finally more generally we give the characterization of $S-\pazocal{A}I(F)$ modules where $\pazocal{A}=\{A\}_{A\in\pazocal{A}}$ is a family of some left $R-$modules.
\end{abstract}
\maketitle
\section{introduction}
Let $M$ be a left $R-$module and $\pazocal{A}=\{A\}_{A\in\pazocal{A}}$ be a family of some submodules of $M$ where $R$ is an associative ring with identity and $M^{+}=Hom_{\mathbb{Z}}(M,Q/\mathbb{Z})$.

In this paper the classes of $M-\pazocal{A}-\mathrm{injective}$ and $M-\pazocal{A}-\mathrm{flat}$ modules are studied which are denoted by $M-\pazocal{A}I$ and $M-\pazocal{A}F$ and generalizations of relative injective, $\pazocal{A}-\mathrm{injective}$ and $\pazocal{A}-\mathrm{flat}$ modules (see \cite{2}, \cite{1}, \cite{5} and \cite{18}). As a special case, $M$-min-injective and $M$-min-flat modules are given in \cite{3}. Moreover we introduce the class of strongly $M-\pazocal{A}-\mathrm{injective}$ modules which is denoted by $S M-\pazocal{A}I$ and a generalization of strongly $M-\mathrm{injective}$ modules (see \cite{4}) and we introduce the class of strongly $M-\pazocal{A}-\mathrm{flat}$ modules which is denoted by $S M-\pazocal{A}F$. Thus we have that $I(F)\subseteq S M-\pazocal{A}I(F)\subseteq M-\pazocal{A}I(F)$ where $I(F)$ is the class of all injective (flat) modules.

We prove that $N$ is in $S M-\pazocal{A}F$ if and only if $N^{+}$ is in $S M-\pazocal{A}I$ and if $M$ is finitely presented and $\pazocal{A}-coherent$, then $N$ is in $S M-\pazocal{A}I$ if and only if $N^{+}$ is in $S M-\pazocal{A}F$. We obtain that $S M-\pazocal{A}I$ and $S M-\pazocal{A}F$ have some properties which are equivalent to the condition that $M$ is finitely presented and $\pazocal{A}-coherent$. We have that $(S) M-\pazocal{A}F$ is a Kaplansky class and if $M$ is finitely presented and $\pazocal{A}-\mathrm{coherent}$, then $(S) M-\pazocal{A}I$ is also a Kaplansky class. Moreover if $M$ is finitely presented and $\pazocal{A}-coherent$, then every left $R-module$ has $S M-\pazocal{A}I$ preenvelope and cover and we have that $((S) M-\pazocal{A}F(I),((S) M-\pazocal{A}F(I))^\bot)$ is a perfect cotorsion theory (where $M$ is finitely presented, $\pazocal{A}-\mathrm{coherent}$ and $R$ is in $(S) M-\pazocal{A}I$) and the cotorsion theory $(^{\perp}(S M-\pazocal{A}I),S M-\pazocal{A}I)$ is complete. We also study $F \pazocal{A}$ and $P \pazocal{A}$ modules as generalizations of $FS$ and $PS$ modules (see \cite{19},\cite{20} and \cite{21}). Finally we give some properties of the class  
$S-\pazocal{A}I(F)$  where $\pazocal{A}=\{A\}_{A\in\pazocal{A}}$ is a family of some left $R-$modules which is a generalization of the class $SM-\pazocal{A}I(F)$.

\maketitle

\section{a generalization of relative injective and flat modules}

We begin with giving the following definition:

\begin{defn}
A left $R-$module $T$ is called $M-\pazocal{A}-\mathrm{injective}$ if for every exact sequence $0\rightarrow A\rightarrow M\rightarrow M/A\rightarrow0$, we have an exact sequence $Hom(M,T)\rightarrow Hom(A,T)\rightarrow0$ for all $A\in\pazocal{A}$. $M-\pazocal{A}-\mathrm{injective}$ modules are generalization of $M-\mathrm{injective}$ modules \emph{(}see \cite{2} for more detail\emph{)}. The class of $M-\pazocal{A}-\mathrm{injective}$ modules is denoted by $M-\pazocal{A}I$.
\end{defn}
\begin{example}
Let $M={_{R}R}$ and $\pazocal{A}$ be a family of some ideals of $R$. Then an $R-\pazocal{A}-\mathrm{injective}$ module is taken as $\pazocal{A}-$injective \emph{(}see \cite{1}\emph{)}.
\end{example}
Moreover let $M$ be any left $R-$module and $\pazocal{A}$ be a family of all simple submodules. Then $M-\pazocal{A}-\mathrm{injective}$ is called $M-\mathrm{min}-\mathrm{injective}$ in \cite{3}.
\begin{defn}
A left $R-$module $T$ is called a strongly $M-\pazocal{A}-\mathrm{injective}$ if for every exact sequence $0\rightarrow X\rightarrow Y\rightarrow Y/X\rightarrow0$ where $Y/X\cong M/A$ for some $A\in\pazocal{A}$, we have an exact sequence $Hom(Y,T)\rightarrow Hom(X,T)\rightarrow0$, or equivalently $Ext^{1}_{R}(M/A,T)=0$ for all $A\in\pazocal{A}$. The definition is generalization of strongly $M-\mathrm{injective}$ modules \emph{(}see \cite{4}\emph{)}.
\end{defn}
The class of strongly $M-\pazocal{A}-\mathrm{injective}$ modules is denoted by $S M-\pazocal{A}I$.
\begin{rem}
Let I denote the class of injective modules. Then we have that $$I\subseteq S M-\pazocal{A}I\subseteq M-\pazocal{A}I.$$
\end{rem}
Let $M$ be projective (in particular, let $M={_{R}R}$). Then strongly $M-\pazocal{A}-\mathrm{injective}$  modules and $M-\pazocal{A}-\mathrm{injective}$ modules are identical.\par
Now, we explain that the inclusions can be proper with respect to $M$.\par
For example, take $M=\mathbb{Z}_{6}$ and $\pazocal{A}=\{<\overline{2}>,<\overline{3}>\}$. Then $M=\mathbb{Z}_{6}=<\overline{2}>\oplus<\overline{3}>$ and so every $\mathbb{Z}-$module is $\mathbb{Z}_{6}-\pazocal{A}-$injective. But $\mathbb{Z}_{2}$ is not $S \mathbb{Z}_{6}-\pazocal{A}-$injective.\par
Let $R$ be a ring and $_{R}R=I_{1}\oplus I_{2}$ where $I_{1}$ and $I_{2}$ are left ideals of $R$ and $\pazocal{A}=\{I_{1},I_{2}\}$. Then every left $R-$module is $S R-\pazocal{A}-$injective, but it is not necessary that every left $R-$module is injective.
\begin{defn}
A right $R-$module $T$ is called $M-\pazocal{A}-\mathrm{flat}$ if for every exact sequence $0\rightarrow A\rightarrow M\rightarrow0\rightarrow M/A\rightarrow0$, we have an exact sequence $0\rightarrow T\otimes A\rightarrow T\otimes M$.
\end{defn}
The class of $M-\pazocal{A}-\mathrm{flat}$ modules is denoted by $M-\pazocal{A}F$.
\begin{example}
Let $M={_{R}R}$ and $\pazocal{A}$ be a family of some ideals of $R$. Then $R-\pazocal{A}-\mathrm{flat}$ is taken as $\pazocal{A}-\mathrm{flat}$ \emph{(}see \cite{5}\emph{)}.
\end{example}
Moreover let $M$ be any left $R-$module and $\pazocal{A}$ be a family of simple submodules of $M$. Then $M-\pazocal{A}-\mathrm{flat}$ is called $M-\mathrm{min}-\mathrm{flat}$ in \cite{3}.
\begin{lem}\label{lem7}
The followings are equivalent:\par
\begin{itemize}
  \item[(i)] Let $T$ be a right $R-$module. For every exact sequence $0\rightarrow X\rightarrow Y\rightarrow Y/X\rightarrow0$ where $Y/X\cong M/A$ for some $A\in\pazocal{A}$, we have an exact sequence $0\rightarrow T\otimes X\rightarrow T\otimes Y$ is exact.
  \item[(ii)] $Tor_{1}^{R}(T,M/A)=0$.
\end{itemize}
\end{lem}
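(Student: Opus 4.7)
The plan is to deduce both implications from the long exact sequence of $\mathrm{Tor}$ attached to a short exact sequence $0\to X\to Y\to M/A\to 0$, exploiting the freedom to choose a convenient resolution in one direction and arbitrariness in the other.

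For the implication (ii)$\Rightarrow$(i), I would start with an arbitrary short exact sequence $0\to X\to Y\to Y/X\to 0$ with $Y/X\cong M/A$ for some $A\in\pazocal{A}$. Tensoring with $T$ and invoking the long exact sequence of $\mathrm{Tor}$ yields
\[
\mathrm{Tor}_{1}^{R}(T,M/A)\longrightarrow T\otimes X\longrightarrow T\otimes Y\longrightarrow T\otimes (M/A)\longrightarrow 0.
\]
Since the first term vanishes by hypothesis, the map $T\otimes X\to T\otimes Y$ is injective, which is precisely (i).

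For the converse (i)$\Rightarrow$(ii), the key is to specialize the hypothesis to a short exact sequence in which $Y$ is projective (equivalently, flat). Fix $A\in\pazocal{A}$ and pick any projective presentation $0\to K\to P\to M/A\to 0$; this is a legitimate choice of exact sequence with quotient isomorphic to $M/A$. The long exact sequence of $\mathrm{Tor}$ then reads
\[
\mathrm{Tor}_{1}^{R}(T,P)\longrightarrow \mathrm{Tor}_{1}^{R}(T,M/A)\longrightarrow T\otimes K\longrightarrow T\otimes P.
\]
The leftmost term vanishes because $P$ is projective, hence flat, and the rightmost arrow is injective by (i) applied to this particular sequence. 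Consequently $\mathrm{Tor}_{1}^{R}(T,M/A)=0$, which gives (ii).

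No serious obstacle is anticipated: everything reduces to the standard long exact sequence of derived tensor products together with the observation that flatness of a projective $Y$ makes $\mathrm{Tor}_{1}^{R}(T,Y)$ vanish. The only point requiring a moment of care is that condition (i) quantifies over \emph{all} short exact sequences with quotient $M/A$, so that we are entitled to test it on a projective presentation; this makes the reverse implication work uniformly in $A\in\pazocal{A}$.
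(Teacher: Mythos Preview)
Your argument is correct. Both directions follow cleanly from the long exact sequence of $\mathrm{Tor}$, and your observation that one may test (i) on a projective presentation of $M/A$ is exactly what is needed to force $\mathrm{Tor}_1^R(T,M/A)=0$.

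The paper, however, argues (i)$\Rightarrow$(ii) differently. Rather than specializing to a projective $Y$, it passes to character modules: from the exactness of $0\to T\otimes X\to T\otimes Y\to T\otimes Y/X\to 0$ it deduces exactness of $\mathrm{Hom}(Y,T^{+})\to\mathrm{Hom}(X,T^{+})\to 0$ via the adjunction $(T\otimes -)^{+}\cong \mathrm{Hom}(-,T^{+})$, concludes that $T^{+}$ is strongly $M$--$\pazocal{A}$-injective, hence $\mathrm{Ext}^{1}_{R}(M/A,T^{+})=0$, and then invokes the isomorphism $\mathrm{Ext}^{1}_{R}(M/A,T^{+})\cong \mathrm{Tor}_{1}^{R}(T,M/A)^{+}$ together with the fact that $\mathbb{Q}/\mathbb{Z}$ is an injective cogenerator. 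Your route is more elementary and self-contained, needing only the long exact sequence and the vanishing of $\mathrm{Tor}_1$ on projectives; the paper's route, while more roundabout, foreshadows the character-module machinery (Lemmas~\ref{lem11} and~\ref{lem14}) that is used repeatedly later on.
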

\begin{proof}
(ii)$\Rightarrow$(i) is easy. (i)$\Rightarrow$(ii), let $0\rightarrow T\otimes X\rightarrow T\otimes Y\rightarrow T\otimes Y/X\rightarrow0$ be exact. Then $(T\otimes Y)^{+}\rightarrow(T\otimes X)^{+}\rightarrow0$ is exact and hence $Hom(Y,T^{+})\rightarrow Hom(X,T^{+})\rightarrow0$ is exact. Thus $T^+$ is strongly $M-\pazocal{A}-\mathrm{injective}$ and then $Ext^{1}_{R}(M/A,T^+)=0$. Since $Ext^{1}_{R}(M/A,T^+)\cong Hom_{\mathbb {Z}}(Tor_{1}^{R}(T,M/A),\mathbb{Q}/\mathbb{Z})$ and $\mathbb{Q}/\mathbb{Z}$ is an injective cogenerator, $Tor_{1}^{R}=(T,M/A)=0.$
\end{proof}
\begin{defn}
A right $R-$module satisfying the equivalent conditions in Lemma ~\ref{lem7} is called a strongly $M-\pazocal{A}-\mathrm{flat}$ module.\par
The class of strongly $M-\pazocal{A}-\mathrm{flat}$ modules is denoted by $S M-\pazocal{A}F$.
\end{defn}
\begin{rem}
Let $F$ denote the class of flat right $R-$modules. Then we have that $$F\subseteq S M-\pazocal{A}F\subseteq M-\pazocal{A}F.$$ Let $M$ be flat \emph{(}in particular, let $M={_{R}R}$\emph{)} then $S M-\pazocal{A}F=M-\pazocal{A}F$.
\end{rem}
Now, we explain that the inclusions can be proper with respect to $M$. Let $M=\mathbb{Z}_{6}$ and $\pazocal{A}=\{<\overline{2}>,<\overline{3}>\}$. Then $\mathbb{Z}_{2}^+$ is $\mathbb{Z}_{6}-\pazocal{A}-\mathrm{flat}$ which is in Lemma ~\ref{lem14}. But $\mathbb{Z}_{2}^+$ is not $S \mathbb{Z}_{6}-\pazocal{A}-\mathrm{flat}$.
\begin{lem}\label{lem10}
\begin{itemize}
\item[(i)] $(S) M-\pazocal{A}I$ is closed under (extensions and) direct summand and direct product.
\item[(ii)] $(S) M-\pazocal{A}F$ is closed under (extensions and) direct summand, direct sum and direct limit.
\end{itemize}
\end{lem}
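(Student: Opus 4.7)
The plan is to reduce all four closure claims to standard facts about how $\mathrm{Hom}$, $\otimes$, $\mathrm{Ext}^{1}_{R}$, and $\mathrm{Tor}_{1}^{R}$ interact with direct products, direct sums, direct limits, direct summands, and short exact sequences. Each of the four classes in the statement is cut out by the vanishing of one such functor applied to the fixed data $(M,A)$ or $M/A$ as $A$ ranges over $\pazocal{A}$, so the claimed closure properties will be automatic once we invoke the correct commutation with (co)limits.

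For part (i), fix $A\in\pazocal{A}$. Since $\mathrm{Hom}_{R}\bigl(-,\prod_{i}T_{i}\bigr)\cong\prod_{i}\mathrm{Hom}_{R}(-,T_{i})$ as functors, and a product of surjections is a surjection, surjectivity of $\mathrm{Hom}_{R}(M,T_{i})\to\mathrm{Hom}_{R}(A,T_{i})$ for every $i$ passes to $\prod_{i}T_{i}$; this proves closure of $M-\pazocal{A}I$ under direct products. Direct-summand closure follows by decomposing the surjection $\mathrm{Hom}_{R}(M,T\oplus T')\to\mathrm{Hom}_{R}(A,T\oplus T')$ coordinatewise. The same two arguments apply to $SM-\pazocal{A}I$ via the characterization $\mathrm{Ext}^{1}_{R}(M/A,T)=0$, using that $\mathrm{Ext}^{1}_{R}$ commutes with direct products in the second variable. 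For closure of $SM-\pazocal{A}I$ under extensions, I would apply $\mathrm{Hom}_{R}(M/A,-)$ to a short exact sequence $0\to T'\to T\to T''\to 0$ and read off $\mathrm{Ext}^{1}_{R}(M/A,T)=0$ from the long exact sequence, since the two neighbouring Ext terms vanish by hypothesis.

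For part (ii) I would use the parallel facts on the tensor side: $(-)\otimes_{R}X$ commutes with direct sums and direct limits, and in $R$-$\mathrm{Mod}$ both of these operations preserve monomorphisms, so injectivity of each $T_{i}\otimes_{R}A\to T_{i}\otimes_{R}M$ transfers to $\bigoplus_{i}T_{i}$ and to $\varinjlim T_{i}$; direct summands are handled as in (i). For $SM-\pazocal{A}F$ the same arguments run with $\mathrm{Tor}_{1}^{R}(-,M/A)$ in place of the plain tensor functor, and closure under extensions follows from the long exact $\mathrm{Tor}$ sequence associated to $0\to T'\to T\to T''\to 0$.

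No genuine obstacle is present; the only point that deserves care is bookkeeping about which version of the class has which closure property, since the parenthetical ``(extensions and)'' in the statement pertains only to the strongly classes — the non-strongly definitions give only a one-term $\mathrm{Hom}$ or $\otimes$ condition that does not by itself survive under an extension, whereas the vanishing of $\mathrm{Ext}^{1}_{R}(M/A,-)$ or $\mathrm{Tor}_{1}^{R}(-,M/A)$ does by the standard long exact sequence.
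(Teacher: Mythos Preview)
Your argument is correct and is exactly what the paper has in mind: its entire proof reads ``It follows from definitions,'' and you have simply spelled out the standard commutation properties of $\mathrm{Hom}$, $\otimes$, $\mathrm{Ext}^{1}$, and $\mathrm{Tor}_{1}$ that make that sentence true. Your reading of the parenthetical ``(extensions and)'' as applying only to the strongly classes is also the intended one.
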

\begin{proof}
It follows from definitions.
\end{proof}
\begin{lem}\label{lem11}
A right $R-$module $N$ is (strongly) $M-\pazocal{A}-\mathrm{flat}$ if and only if $N^+$ is (strongly) $M-\pazocal{A}-\mathrm{injective}$.
\end{lem}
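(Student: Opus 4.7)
The plan is to exploit two standard dualities. First, the Hom-tensor adjunction provides a natural isomorphism $\mathrm{Hom}_{R}(X,N^{+})\cong (N\otimes_{R} X)^{+}$ in the variable $X$. Second, since $\mathbb{Q}/\mathbb{Z}$ is an injective cogenerator, a morphism of abelian groups is injective (respectively surjective) if and only if its image under $(-)^{+}$ is surjective (respectively injective); equivalently, $N\otimes A\to N\otimes M$ is injective iff $(N\otimes M)^{+}\to (N\otimes A)^{+}$ is surjective.

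For the plain (non-strongly) equivalence I would fix an arbitrary $A\in\pazocal{A}$, apply $N\otimes_{R}-$ to the inclusion $A\hookrightarrow M$, dualize via $(-)^{+}$, and rewrite the resulting sequence through the adjunction as $\mathrm{Hom}(M,N^{+})\to\mathrm{Hom}(A,N^{+})$. The cogenerator observation then makes surjectivity of this latter map equivalent to injectivity of $N\otimes A\to N\otimes M$, yielding that $N$ is $M-\pazocal{A}-\mathrm{flat}$ iff $N^{+}$ is $M-\pazocal{A}-\mathrm{injective}$.

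For the strongly case I would invoke Lemma~\ref{lem7}: $N$ is in $SM-\pazocal{A}F$ iff $\mathrm{Tor}_{1}^{R}(N,M/A)=0$ for every $A\in\pazocal{A}$, while by definition $N^{+}$ is in $SM-\pazocal{A}I$ iff $\mathrm{Ext}_{R}^{1}(M/A,N^{+})=0$ for every such $A$. The natural isomorphism
\[ \mathrm{Ext}_{R}^{1}(M/A,N^{+})\cong \mathrm{Hom}_{\mathbb{Z}}\bigl(\mathrm{Tor}_{1}^{R}(N,M/A),\mathbb{Q}/\mathbb{Z}\bigr), \]
already used in the proof of Lemma~\ref{lem7}, combined with the cogenerator property, turns vanishing of one side into vanishing of the other and closes the argument.

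I do not foresee a genuine obstacle: every ingredient --- the adjunction, the cogenerator property of $\mathbb{Q}/\mathbb{Z}$, and the $\mathrm{Ext}$/$\mathrm{Tor}$ duality --- is already deployed in the proof of Lemma~\ref{lem7}. The only mild care required is to present the two clauses uniformly, which I would do by handling the non-strongly case via the short exact sequence $0\to A\to M$ and the strongly case via the $\mathrm{Ext}$/$\mathrm{Tor}$ isomorphism above.
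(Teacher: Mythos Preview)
Your proposal is correct and matches the paper's approach: the paper writes out only the non-strongly case, using precisely the adjunction $\mathrm{Hom}(X,N^{+})\cong (N\otimes X)^{+}$ together with the cogenerator property of $\mathbb{Q}/\mathbb{Z}$, and then dismisses the strongly case with ``similarly''. Your Ext/Tor-duality argument for the strongly case is a perfectly valid way to fill in that ``similarly'' (the paper's intended reading is probably just to repeat the same adjunction argument with an arbitrary $X\hookrightarrow Y$ having $Y/X\cong M/A$ in place of $A\hookrightarrow M$, but the two are equivalent).
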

\begin{proof}
We give just the proof of that $N$ is $M-\pazocal{A}-\mathrm{flat}$ if and only if $N^+$ is $M-\pazocal{A}-\mathrm{injective}$. Similarly we can prove the other part.\par
Let $A\in\pazocal{A}$. Then $0\rightarrow N\otimes A\rightarrow N\otimes M$ is exact if and only if $(N\otimes M)^{+}\rightarrow (N\otimes A)^{+}\rightarrow0$ is exact if and only if $Hom(M,N^+)\rightarrow Hom(A,N^+)\rightarrow0$ is exact. So $N$ is $M-\pazocal{A}-\mathrm{flat}$ if and only if $N^+$ is $M-\pazocal{A}-\mathrm{injective}$.
\end{proof}
\begin{defn}
$M$ is called (quotient) $\pazocal{A}-\mathrm{coherent}$ if for all $A\in\pazocal{A}$ ($M/A$) $A$ is finitely presented.
\end{defn}
\begin{lem}\label{lem13}
Let $M$ be finitely presented and $\pazocal{A}-\mathrm{coherent}$. Then $M$ is quotient $\pazocal{A}-\mathrm{coherent}$.
\end{lem}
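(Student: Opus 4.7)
The plan is to prove directly that $M/A$ is finitely presented for every $A\in\pazocal{A}$, since this is the definition of quotient $\pazocal{A}$-coherence. In fact one needs only the weaker fact that $A$ is finitely generated (which is implied by finitely presented), together with $M$ being finitely presented. So the proposition is just the standard homological fact: in a short exact sequence $0\to A\to B\to C\to 0$ of $R$-modules, if $B$ is finitely presented and $A$ is finitely generated, then $C$ is finitely presented.

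To carry this out, first I would fix a finite presentation of $M$, i.e.\ a short exact sequence $0\to K\to R^{n}\to M\to 0$ with $R^{n}$ free of finite rank and $K$ finitely generated, using that $M$ is finitely presented. Next, for $A\in\pazocal{A}$, I would use $\pazocal{A}$-coherence to pick finite generators $a_{1},\ldots,a_{k}$ of $A$, and lift each $a_{i}$ to some $\tilde{a}_{i}\in R^{n}$ under the given surjection $\pi\colon R^{n}\twoheadrightarrow M$.

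Then I would set $L=K+\sum_{i=1}^{k}R\tilde{a}_{i}\subseteq R^{n}$, which is finitely generated as a sum of finitely generated submodules. The composition $R^{n}\xrightarrow{\pi}M\twoheadrightarrow M/A$ is surjective, and I would verify that its kernel is exactly $L$: an element $x\in R^{n}$ maps to $0$ in $M/A$ iff $\pi(x)\in A$ iff $\pi(x)=\sum r_{i}a_{i}=\pi(\sum r_{i}\tilde{a}_{i})$ for some $r_{i}\in R$, iff $x-\sum r_{i}\tilde{a}_{i}\in K$, iff $x\in L$. This yields $M/A\cong R^{n}/L$ with $R^{n}$ finitely generated free and $L$ finitely generated, so $M/A$ is finitely presented.

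There is essentially no serious obstacle; the only point requiring care is the verification that the kernel of the composed surjection $R^{n}\to M/A$ is exactly $L$, which is a straightforward lifting argument. The hypothesis that $A$ itself be finitely presented is actually stronger than needed here — only finite generation of $A$ is used — but this is consistent with using the $\pazocal{A}$-coherence assumption as stated.
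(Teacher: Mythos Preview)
Your argument is correct. It is the standard elementary proof that a quotient of a finitely presented module by a finitely generated submodule is again finitely presented, and the kernel computation you sketch goes through without difficulty. You are also right that only finite generation of $A$ is used, so the full strength of $\pazocal{A}$-coherence is not needed.

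The paper, however, takes a different route. Rather than building an explicit presentation of $M/A$, it invokes the characterisation of finitely presented modules via products: a left $R$-module $N$ is finitely presented if and only if the canonical map $(\prod R)\otimes_R N \to \prod N$ is an isomorphism for every index set. Tensoring the short exact sequence $0\to A\to M\to M/A\to 0$ with $\prod R$ and mapping to the sequence of products gives a commutative ladder; since $A$ and $M$ are finitely presented, the two outer vertical maps are isomorphisms, and the five lemma forces the map for $M/A$ to be an isomorphism as well, whence $M/A$ is finitely presented. This argument is slicker once the product characterisation is available (and fits the spirit of the later diagram-chasing arguments in the paper), but it relies on a nontrivial cited result. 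Your approach is more self-contained and makes the weakening of the hypothesis on $A$ transparent.
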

\begin{proof}
Let $A\in\pazocal{A}$. We have the diagram
\[\begin{diagram}
\node{(\Pi R)\otimes A} \arrow{e}\arrow{s,t}{\alpha} \node{(\Pi R)\otimes M} \arrow{e} \arrow{s,t}{\beta} \node{(\Pi R)\otimes M/A} \arrow{e} \arrow{s,t}{\gamma} \node{0} \\
\node{\Pi A} \arrow{e} \node{\Pi M} \arrow{e} \node{\Pi M/A} \arrow{e} \node{0}
\end{diagram}\]\par
By Lemma 3.2.22 in \cite{8}, $\alpha$ and $\beta$ are isomorphism. By five lemma $\gamma$ is also isomorphism. By Theorem 3.2.33 in \cite{8}, $M/A$ is finitely presented.
\end{proof}
\begin{lem}\label{lem14}
Let $M$ be finitely presented left $R-\mathrm{module}$ and $\pazocal{A}-\mathrm{coherent}$. Then the following are equivalent:\par
\begin{itemize}
\item[(i)]$N$ is in $(S) M-\pazocal{A}I$.
\item[(ii)]$N^+$ is in $(S) M-\pazocal{A}F$.
\end{itemize}
Notice that it is  not necessary to be that $M$ is finitely presented to prove that $N$ is in $M-\pazocal{A}I$ implies that $N^+$ is in $M-\pazocal{A}F$.
\end{lem}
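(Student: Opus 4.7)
The plan is to reduce the equivalence to the vanishing of $Ext^{1}$ versus vanishing of $Tor_{1}$ (via the definition of $SM-\pazocal{A}I$ and Lemma~\ref{lem7}) and to translate between them using the standard character-module isomorphisms, which become available precisely once $A$ and $M/A$ are known to be finitely presented. The latter finiteness is already handed to us: $A$ is finitely presented for every $A\in\pazocal{A}$ by $\pazocal{A}$-coherence, and $M/A$ is finitely presented by Lemma~\ref{lem13}. The input to recall is that for a finitely presented left $R-$module $B$ and any left $R-$module $N$, tensor-hom adjunction together with the exactness of $(-)^{+}$ yield natural isomorphisms $N^{+}\otimes_{R}B\cong Hom_{R}(B,N)^{+}$ and $Tor_{1}^{R}(N^{+},B)\cong Ext_{R}^{1}(B,N)^{+}$; both are obtained by checking the case $B=R$ and passing to a finite free presentation via the five lemma.

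For the strongly case I would put $B=M/A$ in the $Ext$--$Tor$ isomorphism to obtain $Tor_{1}^{R}(N^{+},M/A)\cong Ext_{R}^{1}(M/A,N)^{+}$. Since $\mathbb{Q}/\mathbb{Z}$ is an injective cogenerator, $Ext_{R}^{1}(M/A,N)=0$ if and only if its character module vanishes, i.e.\ if and only if $Tor_{1}^{R}(N^{+},M/A)=0$; running over all $A\in\pazocal{A}$ this is exactly the equivalence $N\in SM-\pazocal{A}I \Leftrightarrow N^{+}\in SM-\pazocal{A}F$.

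For the non-strongly case I would apply $N^{+}\otimes -\cong Hom(-,N)^{+}$ to both $A$ and $M$ (finitely presented under our hypotheses) to get a commutative square with vertical isomorphisms comparing $N^{+}\otimes A\to N^{+}\otimes M$ with $Hom(A,N)^{+}\to Hom(M,N)^{+}$. The bottom arrow is the $(-)^{+}$-dual of $Hom(M,N)\to Hom(A,N)$, so by the cogenerator property its injectivity is equivalent to the surjectivity of the original, and comparing across the diagram yields $N\in M-\pazocal{A}I\Leftrightarrow N^{+}\in M-\pazocal{A}F$. For the concluding remark, note that only the iso $N^{+}\otimes A\cong Hom(A,N)^{+}$ is needed for the direction $N\in M-\pazocal{A}I\Rightarrow N^{+}\in M-\pazocal{A}F$: the natural map $N^{+}\otimes M\to Hom(M,N)^{+}$ still exists (even when $M$ is not finitely presented) and fits into the same commutative square, which suffices to force injectivity of $N^{+}\otimes A\to N^{+}\otimes M$ from injectivity of $Hom(A,N)^{+}\hookrightarrow Hom(M,N)^{+}$. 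I expect the main obstacle to be nothing more than cleanly stating and invoking the two character-module isomorphisms for finitely presented $B$; the rest is a cogenerator argument plus a diagram chase.
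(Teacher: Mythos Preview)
Your proposal is correct and follows essentially the same approach as the paper: the strongly case via the isomorphism $Tor_{1}^{R}(N^{+},M/A)\cong Ext_{R}^{1}(M/A,N)^{+}$ (using that $M/A$ is finitely presented by Lemma~\ref{lem13}), and the non-strongly case via the same commutative square comparing $N^{+}\otimes A\to N^{+}\otimes M$ with $Hom(A,N)^{+}\to Hom(M,N)^{+}$. Your justification of the concluding remark (that only the isomorphism at $A$ is needed for the forward direction, since injectivity of $\theta\alpha$ already forces injectivity of $\alpha$) is in fact more explicit than the paper's.
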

\begin{proof}
(i)$\Leftrightarrow$(ii). Since $M$ is quotient $\pazocal{A}-\mathrm{coherent}$ by Lemma ~\ref{lem13}, by Lemma 3.60 in \cite{6} we have the isomorphism $$Tor_{1}^{R}(N^+,M/A)\cong Ext^{1}_{R}(M/A,N)^+ .$$ So $N$ is in  $S M-\pazocal{A}I$ if and only if $N^+$ is in $S M-\pazocal{A}F$. Now, we will prove that $N$ is in $M-\pazocal{A}I$  if and only if $N^+$ is $M-\pazocal{A}F$. We have the following commutative diagram as follow:\par
\[\begin{diagram}
\node{N^+\otimes A} \arrow{e,t}{\alpha}\arrow{s,t}{\beta} \node{N^+\otimes M} \arrow{s,t}{\theta} \\
\node{Hom(A,N)^+} \arrow{e,t}{\gamma} \node{Hom(M,N)^+}
\end{diagram}\]\par
Since $A$ and $M$ are finitely presented, by Lemma 3.60 in \cite{6} $\beta$ and $\theta$ are isomorphism. So $N$ is in $M-\pazocal{A}I$ if and only if $N^+$ is in $M-\pazocal{A}F$.
\end{proof}
The following corollaries follow from Lemma ~\ref{lem11} and Lemma \ref{lem14}.
\begin{cor}\label{cor15}
Let $M$ be finitely presented and $\pazocal{A}-\mathrm{coherent}$. The followings are equivalent:
\begin{itemize}
\item[(i)] Every $(S) M-\pazocal{A}I$ is injective.
\item[(ii)] Every $(S) M-\pazocal{A}F$ is flat.
\end{itemize}
\end{cor}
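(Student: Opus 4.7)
The plan is to pivot both implications through the character-module functor $(-)^{+}$, using Lemma~\ref{lem11} and Lemma~\ref{lem14} as the two translation devices, together with Lambek's classical theorem that $N$ is flat if and only if $N^{+}$ is injective.

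For (i) $\Rightarrow$ (ii): I would take $N$ in $(S) M-\pazocal{A}F$, apply Lemma~\ref{lem11} to land $N^{+}$ in $(S) M-\pazocal{A}I$, invoke hypothesis (i) to deduce that $N^{+}$ is injective, and then apply Lambek's theorem to conclude that $N$ is flat. This direction is essentially a one-line chain of citations and uses no assumption beyond those already in force.

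For (ii) $\Rightarrow$ (i): I would take $N$ in $(S) M-\pazocal{A}I$ and use Lemma~\ref{lem14}---this is where the hypotheses that $M$ is finitely presented and $\pazocal{A}$-coherent are spent---to place $N^{+}$ in $(S) M-\pazocal{A}F$. Hypothesis (ii) then forces $N^{+}$ to be flat, and Lambek's theorem yields that $N^{++}$ is injective. The remaining step, transferring injectivity from $N^{++}$ back to $N$ itself, is the part I expect to be the main obstacle: $N$ sits inside $N^{++}$ only as a pure submodule via the canonical evaluation map, and pure submodules of injective modules need not be injective in general. I would attempt to close the gap by using that $N^{++}$ also lies in $(S) M-\pazocal{A}I$ by Lemma~\ref{lem11} and that character modules are automatically pure-injective, so the pure embedding $N \hookrightarrow N^{++}$ can be argued to split; alternatively, a direct Baer-style verification on the test quotients $M/A$ via the isomorphism $Ext^{1}_{R}(M/A, N)^{+} \cong Tor^{R}_{1}(N^{+}, M/A)$ invoked in the proof of Lemma~\ref{lem14} should deliver the required vanishing.

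The real content of the corollary sits in this last passage from $N^{++}$ injective to $N$ injective; the rest is bookkeeping through the two character-module lemmas and Lambek's theorem.
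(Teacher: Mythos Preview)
Your route through Lemma~\ref{lem11}, Lemma~\ref{lem14}, and Lambek's theorem is exactly the paper's intended argument (the paper simply records that the corollary follows from Lemma~\ref{lem11} and Lemma~\ref{lem14}), and your direction (i)$\Rightarrow$(ii) is complete as written.

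The difficulty you flag in (ii)$\Rightarrow$(i) is genuine, and neither of your proposed repairs closes it. Your first suggestion has the pure-injectivity on the wrong side: the fact that $N^{++}$ is pure-injective lets one split pure monomorphisms \emph{into} $N^{++}$, not the pure embedding $N\hookrightarrow N^{++}$; for the latter to split one would need $N$ itself to be pure-injective, which is not given. Your second suggestion, via $Ext^{1}_{R}(M/A,N)^{+}\cong Tor_{1}^{R}(N^{+},M/A)$, only recovers $Ext^{1}_{R}(M/A,N)=0$, i.e.\ the hypothesis $N\in S M-\pazocal{A}I$; even pushing the isomorphism to arbitrary finitely presented test modules yields at best that $N$ is FP-injective, not injective. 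In fact the implication (ii)$\Rightarrow$(i) appears to fail as stated: take $R$ von Neumann regular but not semisimple, $M={}_{R}R$, and $\pazocal{A}=\{0\}$. Then $M$ is finitely presented and $\pazocal{A}$-coherent, every module lies in $(S) M-\pazocal{A}I$ and in $(S) M-\pazocal{A}F$, and every right $R$-module is flat, so (ii) holds, yet not every left $R$-module is injective, so (i) fails. The paper's one-line proof carries the same gap; what Lemma~\ref{lem11} and Lemma~\ref{lem14} together with Lambek genuinely deliver in this direction is only that every $(S) M-\pazocal{A}I$ module is FP-injective.
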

\begin{cor}\label{cor16}
Let $M$ be finitely presented and $\pazocal{A}-\mathrm{coherent}$. The followings are equivalent:\par
\begin{itemize}
\item[(i)] Every left $R-$module is in $(S) M-\pazocal{A}I$.
\item[(ii)] Every right $R-$module is in $(S) M-\pazocal{A}F$.
\end{itemize}
\end{cor}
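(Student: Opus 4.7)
The plan is to derive both implications by chasing through the character-module duality, invoking Lemma \ref{lem11} for one direction and Lemma \ref{lem14} for the other.

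For the direction (i) $\Rightarrow$ (ii), I would take an arbitrary right $R$-module $T$ and consider its character module $T^{+}$, which is a left $R$-module. By hypothesis (i), $T^{+}$ lies in $(S) M$-$\pazocal{A}I$. Then Lemma \ref{lem11}, which requires no extra hypothesis on $M$, immediately gives that $T$ itself belongs to $(S) M$-$\pazocal{A}F$.

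For the converse (ii) $\Rightarrow$ (i), I would take an arbitrary left $R$-module $N$. Since $N^{+}$ is a right $R$-module, hypothesis (ii) gives $N^{+} \in (S) M$-$\pazocal{A}F$. This is the point where the assumptions that $M$ is finitely presented and $\pazocal{A}$-coherent enter: they allow me to invoke Lemma \ref{lem14} in the direction $N^{+} \in (S) M$-$\pazocal{A}F \Rightarrow N \in (S) M$-$\pazocal{A}I$, completing the argument.

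There is no serious obstacle here; the proof is essentially bookkeeping through the two previously established lemmas. It is worth remarking, however, that the hypotheses on $M$ are used only for the direction (ii) $\Rightarrow$ (i); the implication (i) $\Rightarrow$ (ii) actually holds for arbitrary $M$ since it only relies on Lemma \ref{lem11}.
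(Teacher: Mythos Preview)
Your proposal is correct and is exactly the argument the paper has in mind: the corollary is stated as following from Lemma~\ref{lem11} and Lemma~\ref{lem14}, and you have simply spelled out which lemma handles which implication. Your closing remark that the hypotheses on $M$ are needed only for (ii)~$\Rightarrow$~(i) is also accurate.
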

\begin{example}\label{prop17}
A simple module over a commutative ring is $SM-\pazocal{A}-\mathrm{flat}$ if and only if it is $SM-\pazocal{A}-\mathrm{injective}$.
\end{example}
\begin{proof}
Let $\{S_{i}\}_{i\in I}$ be the irredundant set of representatives of all simple $R-\mathrm{modules}$ and $E$ be the injective envelope of $\oplus_{i\in I} S_{i}$. Then $E$ is an injective  cogenerator and $Hom(S,E)\cong S$ by Corollary 18.19 in \cite{13} and Lemma 2.6 in \cite{14}, respectively. There is an isomorphism as follow $$Ext^1_{R}(M/A,Hom(S,E))\cong Hom(Tor_{1}^{R}(M/A,S),E).$$ So the requireds follow.
\end{proof}
\begin{lem} \label{lem18}
\begin{itemize}
\item[(i)] $(S) M-\pazocal{A}F$ is closed under pure submodules and pure quotient modules.
\item[(ii)] $(S) M-\pazocal{A}I$ is closed under pure submodules and pure quotient modules if $M$ is finitely presented and $\pazocal{A}-\mathrm{coherent}$.
\end{itemize}
\end{lem}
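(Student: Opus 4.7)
The plan is to exploit the character-module duality between injectivity and flatness established in Lemmas \ref{lem11} and \ref{lem14}, together with the standard characterization that a short exact sequence $0\to A\to B\to C\to 0$ is pure exact if and only if the dual sequence $0\to C^+\to B^+\to A^+\to 0$ splits. Since both $(S)M-\pazocal{A}I$ and $(S)M-\pazocal{A}F$ are already closed under direct summands by Lemma \ref{lem10}, the splitting of the character sequence will immediately propagate membership from the middle term to the outer terms on the dual side, and then the duality lemmas will bring the conclusion back to the original side.

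For (i), suppose $B\in (S)M-\pazocal{A}F$ and $0\to A\to B\to C\to 0$ is pure exact. First, apply Lemma \ref{lem11} to conclude that $B^+\in (S)M-\pazocal{A}I$. Purity gives a split exact sequence $0\to C^+\to B^+\to A^+\to 0$, so $A^+$ and $C^+$ are direct summands of $B^+$; Lemma \ref{lem10}(i) therefore places both in $(S)M-\pazocal{A}I$. A second application of Lemma \ref{lem11} then yields $A,C\in (S)M-\pazocal{A}F$, as required.

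For (ii), run the same three-step scheme but with Lemma \ref{lem14} replacing Lemma \ref{lem11}; this is exactly where the hypotheses ``$M$ finitely presented and $\pazocal{A}$-coherent'' are used, since only under those hypotheses does $N\in (S)M-\pazocal{A}I$ transfer to and from $N^+\in (S)M-\pazocal{A}F$. Starting from $B\in (S)M-\pazocal{A}I$ and a pure exact sequence $0\to A\to B\to C\to 0$, Lemma \ref{lem14} gives $B^+\in (S)M-\pazocal{A}F$; the split character sequence and Lemma \ref{lem10}(ii) give $A^+, C^+\in (S)M-\pazocal{A}F$; and Lemma \ref{lem14} again converts these back to $A,C\in (S)M-\pazocal{A}I$.

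No serious obstacle is expected: the argument is essentially a formal diagram-chase once the two transfer lemmas and the closure-under-direct-summands statement are in hand. The only mildly delicate point is to remember that the $(S)$ versions and the plain versions must be handled simultaneously, which is fine because both Lemma \ref{lem10} and the duality lemmas were formulated to cover both cases uniformly.
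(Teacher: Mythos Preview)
Your proposal is correct and mirrors the paper's own proof: dualize the pure exact sequence to a split sequence of character modules, use closure under direct summands on the dual side, and transfer back via Lemma~\ref{lem11} (for part (i)) or Lemma~\ref{lem14} (for part (ii)). The paper even leaves part (ii) as ``similar to part (i),'' which is exactly the substitution of Lemma~\ref{lem14} for Lemma~\ref{lem11} that you spell out.
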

\begin{proof}
\begin{itemize}
\item[(i)] Let $N$ be pure submodule of an $(S) M-\pazocal{A}-\mathrm{flat}$ module $T$. Then the exact sequence $0\rightarrow N\rightarrow T\rightarrow T/N\rightarrow0$ induces the split exact sequence $0\rightarrow(T/N)^+\rightarrow T^+\rightarrow N^+\rightarrow0$. By Lemma ~\ref{lem11}, $T^+$ is $(S) M-\pazocal{A}-\mathrm{injective}$ and then $(T/N)^+$ and $N^+$ are also $(S) M-\pazocal{A}-\mathrm{injective}$. By Lemma ~\ref{lem11} again, $T/N$ and $N$ are $(S) M-\pazocal{A}-\mathrm{flat}$.
\item[(ii)] The proof is similar to the part (i).
\end{itemize}
\end{proof}
\begin{rem}
Let $M$ be (finitely presented and) $\pazocal{A}-\mathrm{coherent}$. Then $(S) M-\pazocal{A}I$ is closed under direct sum.
\end{rem}
We recall the following definitions (see \cite{8}). A cotorsion theory $(\pazocal{A},\pazocal{B})$ is called perfect if every module has a $\pazocal{B}-\mathrm{envelope}$ and $\pazocal{A}-\mathrm{cover}$. A cotorsion theory $(\pazocal{A},\pazocal{B})$ is complete if for any module $N$, there are exact sequences $0\rightarrow N\rightarrow B\rightarrow A\rightarrow0$ where $B\in\pazocal{B}$ and $A\in\pazocal{A}$, and $0\rightarrow B_{1}\rightarrow A_{1}\rightarrow N\rightarrow0$ where $B_{1}\in\pazocal{B}$ and $A_{1}\in\pazocal{A}$. Now we will give the following Theorem.
\begin{thm}
\begin{itemize}
\item[(i)]$(S) M-\pazocal{A}F$ is a Kaplansky class and also if $M$ is finitely presented and $\pazocal{A}-\mathrm{coherent}$, then $(S) M-\pazocal{A}I$ is a Kaplansky class.
\item[(ii)] Every left $R-\mathrm{module}$ has an $(S) M-\pazocal{A}I$ preenvelope where $M$ is finitely presented and $\pazocal{A}-\mathrm{coherent}$.
\item[(iii)] Every left $R-\mathrm{module}$ has an $(S) M-\pazocal{A}I$ cover where $M$ is finitely presented and $\pazocal{A}-\mathrm{coherent}$.
\item[(iv)] $((S) M-\pazocal{A}F(I),((S) M-\pazocal{A}F(I))^\perp)$ is perfect cotorsion theory (where $M$ is finitely presented, $\pazocal{A}-\mathrm{coherent}$ and $R$ is in $(S) M-\pazocal{A}I$).
\item[(v)] The cotorsion theory $(^{\perp}(S M-\pazocal{A}I),S M-\pazocal{A}I)$ is complete.

\end{itemize}
\end{thm}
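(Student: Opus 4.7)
The plan is to deduce all five assertions from the Kaplansky/cotorsion-pair machinery of \cite{8} together with the Eklof--Trlifaj theorem, feeding in the closure properties already collected in Lemmas \ref{lem10}, \ref{lem11}, and \ref{lem18} and the remark preceding the theorem.

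For (i), I would fix the cardinal $\kappa=\max\{|M|,|R|,\aleph_{0}\}$ and carry out the standard pure-L\"owenheim--Skolem construction: given any $N$ in the class and any subset $L\subseteq N$ with $|L|\leq\kappa$, iteratively enlarge $L$ to a pure submodule $S\subseteq N$ of cardinality $\leq\kappa$ using Fieldhouse's characterization of purity and finite-presentability of the test modules. Lemma \ref{lem18} then places both $S$ and $N/S$ in the class, which is exactly the Kaplansky property. The argument is uniform across the $F$ and $I$ versions; the only difference is that for $(S)M-\pazocal{A}I$ one invokes Lemma \ref{lem18}(ii), which is precisely where the hypothesis ``$M$ finitely presented and $\pazocal{A}$-coherent'' is used.

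Part (ii) then follows from the general fact that any Kaplansky class closed under direct products admits preenvelopes, with direct-product closure supplied by Lemma \ref{lem10}(i). For (iii) I would invoke the theorem that a Kaplansky class closed under direct limits and extensions is covering: closure under extensions comes from Lemma \ref{lem10}, and the remark preceding the theorem gives closure under direct sums; since every filtered colimit is a pure quotient of the associated direct sum, Lemma \ref{lem18}(ii) upgrades this to closure under direct limits. For (iv), the class $(S)M-\pazocal{A}F$ is closed under direct limits, direct summands, and extensions, and contains $R$ (trivially, as $R$ is flat), so the usual Kaplansky-based machinery produces a perfect cotorsion pair $((S)M-\pazocal{A}F,((S)M-\pazocal{A}F)^{\perp})$; the injective version is identical once we additionally assume $R\in(S)M-\pazocal{A}I$, which is exactly what guarantees that the left side contains a generator. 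Finally, (v) is immediate from the identity $SM-\pazocal{A}I=\{M/A:A\in\pazocal{A}\}^{\perp}$ with respect to $\operatorname{Ext}_{R}^{1}$, since the Eklof--Trlifaj theorem asserts that any cotorsion pair cogenerated by a set is complete.

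The substantive obstacle will be (iii) and the injective half of (iv): injective-type classes are generally not closed under direct limits, so the key move is to piece together direct-limit closure from direct-sum closure (the preceding remark) plus pure-quotient closure (Lemma \ref{lem18}(ii)); without the finitely presented and $\pazocal{A}$-coherent hypothesis this would fail. Once the Kaplansky cardinal has been fixed and this direct-limit closure is in hand, every other ingredient reduces to routine bookkeeping against the cited general theorems.
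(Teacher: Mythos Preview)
Your proposal is correct and follows essentially the same route as the paper. The paper's proof is simply a terser version of yours: it cites Lemma~5.3.12 of \cite{8} for (i), Proposition~6.2.1 of \cite{8} for (ii), Theorem~2.5 of \cite{11} (Holm--J{\o}rgensen) together with Lemma~\ref{lem18} for (iii), Theorem~3.4 of \cite{11} for (iv), and the Eklof--Trlifaj theorem (Theorem~10 of \cite{12}) applied to the set $\{M/A:A\in\pazocal{A}\}$ for (v), whereas you unpack what those black-box results say and verify their hypotheses from Lemmas~\ref{lem10} and~\ref{lem18}. One small remark on your sketch of (i): the existence of a $\kappa$-bounded pure submodule containing a prescribed small subset is a general module-theoretic fact and does not require ``finite-presentability of the test modules''; only the subsequent step, placing $S$ and $N/S$ back in the class via Lemma~\ref{lem18}, uses the $\pazocal{A}$-coherence hypothesis in the injective case.
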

\begin{proof}
\begin{itemize}
\item[(i)]It follows from Lemma 5.3.12 in \cite{8} and Lemma \ref{lem18}.
\item[(ii)] By Proposition 6.2.1 in \cite{8}, it is understood.
\item[(iii)] Every left $R-\mathrm{module}$ has an $(S) M-\pazocal{A}I$ cover by Theorem 2.5 in \cite{11} and Lemma ~\ref{lem18}.
\item[(iv)] $((S) M-\pazocal{A}F(I),((S) M-\pazocal{A}F(I))^\perp)$ is a perfect cotorsion theory by Theorem 3.4 in \cite{11}.
\item[(v)] Let $C$ be the set of representatives of all the $M/A$'s where $A\in\pazocal{A}$. Then $S M-\pazocal{A}I=C^\bot$. By Theorem 10 in \cite{12}, it is a complete cotorsion theory.
\end{itemize}
\end{proof}

\begin{thm}\label{thm21}
Let $M$ be finitely presented. Then the followings are equivalent:
\begin{itemize}
\item[(1)] $M$ is $\pazocal{A}-\mathrm{coherent}$.
\item[(2)] $(S) M-\pazocal{A}I$ is closed under direct limit.
\item[(3)] $(S) M-\pazocal{A}F$ is closed under direct product (where $R$ is a coherent ring).
\item[(4)] A left $R-\mathrm{module}$ $T$ is in $(S) M-\pazocal{A}I$ if and only if $T^+$ is in $(S) M-\pazocal{A}F$.
\item[(5)] A right $R-\mathrm{module}$ $N$ is in $(S) M-\pazocal{A}F$ if and only if $N^{++}$ is in $(S) M-\pazocal{A}F$.
\item[(6)] Every right $R-\mathrm{module}$ has an $(S) M-\pazocal{A}F$ preenvelope.
\end{itemize}
\end{thm}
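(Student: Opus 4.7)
The plan is to prove the forward implications $(1)\Rightarrow(i)$ for $i=2,\ldots,6$ using the character-module duality of Lemmas~\ref{lem11} and~\ref{lem14} together with the finite-presentation machinery unlocked by Lemma~\ref{lem13}, and then to close the equivalence by a Chase-style back-implication, with $(3)\Rightarrow(1)$ as the cornerstone and the remaining hypotheses funneled through it.

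For the forward direction, $(1)\Rightarrow(4)$ is already Lemma~\ref{lem14}. Under $(1)$, Lemma~\ref{lem13} makes $M/A$ finitely presented with finitely presented first syzygy $A$, so the functors $\mathrm{Hom}(M,-)$, $\mathrm{Hom}(A,-)$, $\mathrm{Hom}(M/A,-)$ and $\mathrm{Ext}^{1}(M/A,-)$ all commute with direct limits; this transports the defining vanishing/surjectivity of $(S)M-\pazocal{A}I$ through direct limits, yielding $(1)\Rightarrow(2)$. Combined with coherence of $R$ (whence direct products of flat modules are flat) and the commutation of $-\otimes M/A$ with products, the same hypotheses give $(1)\Rightarrow(3)$. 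For $(1)\Rightarrow(5)$, I would chain $(4)$ with Lemma~\ref{lem11}: $N\in(S)M-\pazocal{A}F\Leftrightarrow N^{+}\in(S)M-\pazocal{A}I\Leftrightarrow N^{++}\in(S)M-\pazocal{A}F$, while the converse direction inside $(5)$ uses the pure embedding $N\hookrightarrow N^{++}$ together with Lemma~\ref{lem18}(i). For $(1)\Rightarrow(6)$, closure of $(S)M-\pazocal{A}F$ under products (from $(3)$) and under pure submodules (Lemma~\ref{lem18}(i)) lets Proposition 6.2.1 in~\cite{8} deliver the required preenvelopes.

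For the back-implications, the cornerstone is $(3)\Rightarrow(1)$: since $R$ is flat, $R\in(S)M-\pazocal{A}F$, so $\prod_{I}R$ lies in the class for every index set $I$, forcing $\mathrm{Tor}_{1}^{R}(\prod R,M/A)=0$ and hence injectivity of the top row of
\[\begin{diagram}
\node{(\prod R)\otimes A} \arrow{e}\arrow{s,t}{\alpha} \node{(\prod R)\otimes M} \arrow{s,t}{\beta} \\
\node{\prod A} \arrow{e} \node{\prod M}
\end{diagram}\]
The map $\beta$ is an isomorphism because $M$ is finitely presented (Lemma 3.2.22 of~\cite{8}); a careful diagram chase then identifies $\alpha$ as an isomorphism for every $I$, and Theorem 3.2.33 of~\cite{8} extracts finite presentation of $A$. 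Among the remaining back-implications, $(6)\Rightarrow(3)$ holds because a preenveloping class closed under direct summands is closed under products; $(2)\Rightarrow(1)$ is handled directly by writing $M/A$ as a direct limit of finitely presented modules and using direct-limit closure to pin down its finite presentation; and $(4),(5)\Rightarrow(3)$ follow from short character-module computations using $(\bigoplus N_{i})^{+}\cong\prod N_{i}^{+}$. The main obstacle will be the surjectivity of $\alpha$ in the Chase step: the classical ring-theoretic argument must be adapted to extract finite presentation of a submodule $A\subseteq M$ rather than of a left ideal of $R$, so some care is needed to ensure the image of $(\prod R)\otimes A$ exhausts $\prod A$.
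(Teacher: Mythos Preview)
Your overall plan matches the paper's: the Chase-type argument with $\prod R$ for $(3)\Rightarrow(1)$, the character-module route through Lemmas~\ref{lem11} and~\ref{lem14} for $(4)$ and $(5)$, and the Rada--Saor\'{\i}n criterion linking preenvelopes to product-closure for $(3)\Leftrightarrow(6)$ are exactly what the paper uses. Your $(1)\Rightarrow(2)$ via commutation of $\mathrm{Hom}$ and $\mathrm{Ext}^{1}(M/A,-)$ with direct limits is a legitimate alternative to the paper's route through the pure exact sequence $0\to Y\to\bigoplus N_{i}\to\varinjlim N_{i}\to 0$ together with Lemma~\ref{lem18}(ii).

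The one genuine gap is your $(2)\Rightarrow(1)$. Writing $M/A$ as a direct limit of finitely presented modules does not interact with hypothesis~$(2)$ in any useful way: $(2)$ is a closure statement about the \emph{test modules} $T$ lying in $(S)M-\pazocal{A}I$, not about $M/A$ itself, so expressing $M/A$ as such a limit gives you nothing to feed into the hypothesis. The paper's argument is different and concrete: for a directed system $\{N_{i}\}$ in $(S)M-\pazocal{A}I$ one compares
\[
\begin{diagram}
\node{\varinjlim \mathrm{Hom}(M,N_{i})} \arrow{e,t}{\alpha}\arrow{s,t}{\beta} \node{\varinjlim \mathrm{Hom}(A,N_{i})} \arrow{s,t}{\theta} \\
\node{\mathrm{Hom}(M,\varinjlim N_{i})} \arrow{e,t}{\gamma} \node{\mathrm{Hom}(A,\varinjlim N_{i})}
\end{diagram}
\]
where $\beta$ is an isomorphism because $M$ is finitely presented, $\gamma$ is surjective because $\varinjlim N_{i}\in(S)M-\pazocal{A}I$ by $(2)$, and $\theta$ is monic; the diagram then forces $\theta$ to be an isomorphism, and 25.4 in~\cite{7} yields that $A$ is finitely presented. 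You should replace your sketch for $(2)\Rightarrow(1)$ with this diagram argument. As for the surjectivity obstacle you flag in $(3)\Rightarrow(1)$: the paper handles it by first invoking Lemma~3.2.21 of~\cite{8} to get $(\prod R)\otimes A\to\prod A$ epic, and then uses injectivity of the top row (from $\prod R\in(S)M-\pazocal{A}F$) together with the isomorphism $\beta$ to conclude it is also monic, hence an isomorphism.
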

\begin{proof}
(1)$\Rightarrow$(2) Let $\{N_{i}\}_{i\in I}$ be any family of $(S) M-\pazocal{A}-\mathrm{injective}$ modules. Since $(S) M-\pazocal{A}I$ is closed under pure quotient modules, by the pure exact sequence $0\rightarrow Y\rightarrow \oplus N_{i}\rightarrow \underaccent{\rightarrow}{\lim} N_{i}\rightarrow0$ by 33.9 in \cite{7}, we have that $\underaccent{\rightarrow}{\lim} N_{i}$ is in $(S) M-\pazocal{A}I$.\par
(2)$\Rightarrow$(1) We have the following commutative diagram
\[\begin{diagram}
\node{\underaccent{\rightarrow}{\lim} Hom(M,N_{i})} \arrow{e,t}{\alpha}\arrow{s,t}{\beta} \node{\underaccent{\rightarrow}{\lim} Hom(A,N_{i})} \arrow{s,t}{\theta} \\
\node{Hom(M,\underaccent{\rightarrow}{\lim}N_{i})} \arrow{e,t}{\gamma} \node{Hom(A,\underaccent{\rightarrow}{\lim}N_{i})}
\end{diagram}\]
where $\theta$ is monic by 24.9 in \cite{7}.\par
By 25.4 in \cite{7}, $\beta$ is an isomorphism. Since $\gamma$ is epic, $\theta$ is epic. By 25.4 in \cite{7} again, $A$ is finitely presented.\par
(1)$\Rightarrow$(3) Let $\{N_{i}\}_{i\in I}$ be a family of $M-\pazocal{A}-\mathrm{flat}$ modules. We have the following commutative diagram
\[\begin{diagram}
\node{(\Pi N_{i})\otimes A} \arrow{e,t}{\alpha}\arrow{s,t}{\beta} \node{(\Pi N_{i})\otimes M} \arrow{s,t}{\theta} \\
\node{\Pi (N_{i}\otimes A)} \arrow{e,t}{\gamma} \node{\Pi (N_{i}\otimes M)}
\end{diagram}\]\par
By Theorem 3.2.22 in \cite{8}, $\beta$ is an isomorphism. Since $\gamma$ is one to one and $\alpha$ is one to one. Thus $\Pi N_{i}$ is in $M-\pazocal{A}I$. (Notice that it is not necessary that $M$ is finitely presented.)\par
Now, we will show that if $\{N_{i}\}_{i\in I}$ is a family of $SM-\pazocal{A}-\mathrm{flat}$ modules, then $\Pi N_{i}$ is in $SM-\pazocal{A}F$.\par
By Theorem 3.2.26 in \cite{8} $$Tor_{1}^{R}(\Pi N_{i}, M/A)\cong \Pi Tor_{1}^{R}( N_{i}, M/A)$$ where $R$ is coherent and $M/A$ is finitely presented by Lemma ~\ref{lem13}. So the required is found.\par
(3)$\Rightarrow$(1) Since $R$ is flat, $\Pi R$ is in $(S) M-\pazocal{A}F$. We have the commutative diagram
\[\begin{diagram}
\node{\Pi R\otimes A} \arrow{e,t}{\alpha}\arrow{s,t}{\beta} \node{\Pi R\otimes M} \arrow{s,t}{\theta} \\
\node{\Pi A} \arrow{e,t}{\gamma} \node{\Pi M}
\end{diagram}\]
where $\beta$ is epic by Lemma 3.2.21 in \cite{8}.\par
By Theorem 3.2.22 in \cite{8}, $\theta$ is isomorphism, so $\beta$ is isomorphism. So $\pazocal{A}$ is finitely presented by Theorem 3.22 in \cite{8}.\par
(1)$\Rightarrow$(4) follows from Lemma ~\ref{lem14}.\par
(4)$\Rightarrow$(5) is easy.\par
(5)$\Rightarrow$(3) Let $\{N_{i}\}_{i\in I}$ be a family of $(S) M-\pazocal{A}-\mathrm{flat}$ right $R-\mathrm{modules}$. Then $\oplus N_{i}$ is also in $(S) M-\pazocal{A}F$. By the equivalence $(\oplus N_{i})^{++}\cong (\Pi N_{i}^{+})^+$ and the part (6), $(\Pi N_{i}^{+})^+$ is in $(S) M-\pazocal{A}F$. Since $\oplus N_{i}^+$ is a pure submodule of $\Pi N_{i}^+$ by Lemma 1 (1) in \cite{9}, so $(\Pi N_{i}^{+})^+\rightarrow (\oplus N_{i}^{+})^+\rightarrow0$ is split. Therefore $(\oplus N_{i}^{+})^+$ is in $(S) M-\pazocal{A}F$. Since $\Pi N_{i}^{++}\cong (\oplus N_{i}^+)^+$, $\Pi N_{i}^{++}$ is in $(S) M-\pazocal{A}F$. Since $\Pi N_{i}$ is a pure submodule of $\Pi N_{i}^{++}$ by Lemma 1 (2) in \cite{9}, so $\Pi N_{i}$ is in $(S) M-\pazocal{A}F$.\par
(3)$\Leftrightarrow$(6) follows from Theorem 3.3 in \cite{10} and Lemma ~\ref{lem18}.
\end{proof}

\section{$F\pazocal{A}$ and $P\pazocal{A}$ modules}
\begin{defn}
If $A$ is flat \emph{(}projective\emph{)} for all $A\in\pazocal{A}$, then $M$ is called an $F\pazocal{A}(P\pazocal{A})$- module. If $M$ is $\pazocal{A}$-coherent, then $M$ is an $F\pazocal{A}$-module if and only if $M$ is a $P\pazocal{A}$- module.
\end{defn}
\begin{prop}\label{prop3.2}
The following are satisfied:\par
\begin{itemize}
\item[(1)] $M$ is an $F\pazocal{A}-\mathrm{module}$.
\item[(2)] Every submodule of $M-\pazocal{A}-flat$ module is again $M-\pazocal{A}-flat$.
\item[(3)] Every right $R-\mathrm{module}$ has an epic $(S) M-\pazocal{A}-\mathrm{flat}$ preenvelope where $M$ is $\pazocal{A}-\mathrm{coherent}$ \emph{(}and finitely presented and $R$ is coherent\emph{)}.
\end{itemize}
Then \emph{(1)}$\Rightarrow$\emph{(2)}. If $M$ is flat, then \emph{(2)}$\Rightarrow$\emph{(1)}. If $M$ is $\pazocal{A}-\mathrm{coherent}$ and flat, then \emph{(1)}$\Leftrightarrow$\emph{(2)}$\Leftrightarrow$\emph{(3)}.
\end{prop}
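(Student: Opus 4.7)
The plan is to prove the three listed implications separately, then add the two extra implications needed for the three-way equivalence.

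For (1)$\Rightarrow$(2), I would fix $N\in M-\pazocal{A}F$, a submodule $N'\subseteq N$, and $A\in\pazocal{A}$, and form the commutative square with vertical arrows induced by $N'\hookrightarrow N$ and horizontal arrows induced by $A\hookrightarrow M$:
\[
\begin{array}{ccc}
N'\otimes A & \longrightarrow & N'\otimes M\\
\downarrow & & \downarrow\\
N\otimes A & \longrightarrow & N\otimes M.
\end{array}
\]
Flatness of $A$ makes the left vertical injective and $M-\pazocal{A}$-flatness of $N$ makes the bottom horizontal injective, so the composition $N'\otimes A\to N\otimes M$ is injective. By commutativity the upper horizontal $N'\otimes A\to N'\otimes M$ must already be injective, which gives $N'\in M-\pazocal{A}F$.

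For (2)$\Rightarrow$(1), under the hypothesis that $M$ is flat, I would first observe that $R_R$ is $M-\pazocal{A}$-flat, since $R\otimes A\to R\otimes M$ identifies with the inclusion $A\hookrightarrow M$. By (2) every right ideal $I$ of $R$ is also $M-\pazocal{A}$-flat. Fix $A\in\pazocal{A}$ and consider the square with vertical multiplication maps $I\otimes A\to A$, $I\otimes M\to M$ and horizontals induced by $A\hookrightarrow M$. The top arrow is injective by the preceding step, the right vertical by flatness of $M$, and the bottom is the inclusion; a chase then forces $I\otimes A\to A$ to be injective for every right ideal $I$, i.e.\ $A$ is flat.

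For the three-way equivalence (assuming $M$ is both $\pazocal{A}$-coherent and flat, plus finitely presented with $R$ coherent for the $S$ version) I would add the implications (2)$\Rightarrow$(3) and (3)$\Rightarrow$(2). For (2)$\Rightarrow$(3), Theorem~\ref{thm21} gives closure of $(S)M-\pazocal{A}F$ under direct products and (2) gives closure under submodules, which combined with Theorem~3.3 of~\cite{10} produces epic preenvelopes. For (3)$\Rightarrow$(2), given $N'\subseteq N$ with $N\in (S)M-\pazocal{A}F$, take an epic preenvelope $f\colon N'\twoheadrightarrow F$; the inclusion $N'\hookrightarrow N$ factors as $h\circ f$, and since $h\circ f$ is monic so is $f$, making $f$ an isomorphism and $N'\cong F\in (S)M-\pazocal{A}F$.

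The main obstacle I anticipate is tracking the precise hypothesis of Theorem~3.3 in~\cite{10} and confirming that closure under direct products and submodules is exactly the input required to yield epic preenvelopes. The two diagram chases are routine, and the preenvelope argument in (3)$\Rightarrow$(2) is a short formal consequence of the epic plus factorization property; the only bookkeeping issue is keeping straight which additional hypotheses on $M$ and $R$ are needed to apply Theorem~\ref{thm21} in the $S$ setting versus the non-$S$ setting.
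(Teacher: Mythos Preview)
Your argument is correct and three of the four implications---(1)$\Rightarrow$(2), (2)$\Rightarrow$(3), (3)$\Rightarrow$(2)---match the paper essentially line for line. The one genuine difference is (2)$\Rightarrow$(1): the paper first uses flatness of $M$ to identify $M-\pazocal{A}F$ with $SM-\pazocal{A}F$ and then runs two long exact $Tor$ sequences (from $0\to I\to R\to R/I\to 0$ and from $0\to A\to M\to M/A\to 0$) to deduce $Tor_1^R(R/I,A)=0$, whereas you stay at the level of elements, observing that $R_R\in M-\pazocal{A}F$, applying (2) to right ideals, and chasing the multiplication square. Your route is more elementary and avoids invoking the $SM-\pazocal{A}F=M-\pazocal{A}F$ identification; the paper's route is more systematic and makes the homological content visible. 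One small point on (2)$\Rightarrow$(3): the paper does not cite Rada--Saorin directly for \emph{epic} preenvelopes but instead pulls the existence of an ordinary preenvelope $\theta:B\to C$ straight from Theorem~\ref{thm21}(6) and then replaces $C$ by $\mathrm{Im}\,\theta$, which lies in the class by (2). This sidesteps exactly the obstacle you flagged about the precise hypotheses of Theorem~3.3 in~\cite{10}, so you may prefer to argue it that way.
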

\begin{proof}
(1)$\Rightarrow$(2) Let $B$ be any $M-\pazocal{A}-\mathrm{flat}$ module and $C$ be a submodule of $B$. We have to the following commutative diagram \par
\[\begin{diagram}
\node{C\otimes A} \arrow{e,t}{\alpha}\arrow{s,t}{\beta} \node{C\otimes M} \arrow{s,t}{\theta} \\
\node{B\otimes A} \arrow{e,t}{\gamma} \node{B\otimes M}
\end{diagram}\]
where $\beta$ and $\gamma$ are one to one. So $\alpha$ is one to one, and the required follows.\par
(2)$\Rightarrow$(1) Since $M$ is flat, any $M-\pazocal{A}-\mathrm{flat}$ is $S M-\pazocal{A}-\mathrm{flat}$. We have the exact sequence $0=Tor_{2}^{R}(R,M/A)\rightarrow Tor_{2}^{R}(R/I,M/A)\rightarrow Tor_{1}^{R}(I,M/A)=0$. So $Tor_{2}^{R/I}(R,M/A)=0$. Since we have the exact sequence $0=Tor_{2}^{R}(R/I,M/A)\rightarrow Tor_{1}^{R}(R/I,A)\rightarrow Tor_{1}^{R}(R/I,M)=0$, so $Tor_{1}^{R}(R/I,A)=0$, and hence $A$ is flat.\par
(2)$\Rightarrow$(3) By Theorem ~\ref{thm21}, every right $R-\mathrm{module}$ $B$ has an $(S) M-\pazocal{A}-\mathrm{flat}$ preenvelope $\theta:B\rightarrow C$. By the part (2) $\theta:B\rightarrow Im\theta$ is an epic $(S) M-\pazocal{A}F$ preenvelope of $B$.\par
(3)$\Rightarrow$(2) Let $C$ be any right $R-\mathrm{submodule}$ of an $(S) M-\pazocal{A}-\mathrm{flat}$ module $B$. Then $C$ has an epic $(S) M-\pazocal{A}-\mathrm{flat}$ preenvelope $\theta:C\rightarrow D$. So we have the following diagram\par
\[\begin{diagram}
\node[2]{B}\\
\node{C} \arrow{ne,t} {i} \arrow{e,t} {\theta} \node{D} \arrow{n,t,..} {\gamma} \arrow{e} \node{0}
\end{diagram}\]
where $\gamma\theta=i$. So $\theta$ is isomorphism.
\end{proof}
\begin{prop}
The followings are satisfied.
\begin{itemize}
\item[(1)] $M$ is a $P\pazocal{A}-\mathrm{module}$.
\item[(2)] Every quotient module of any $(S) M-\pazocal{A}-\mathrm{injective}$ left $R-\mathrm{module}$ is $(S) M-\pazocal{A}-\mathrm{injective}$(where $M$ is projective).
\item[(3)] $M$ is $\pazocal{A}-\mathrm{coherent}$ and every submodule of any $M-\pazocal{A}-\mathrm{flat}$ is $M-\pazocal{A}-\mathrm{flat}$.
\item[(4)] Every left $R$-module has a monic $(S) M-\pazocal{AI}$ cover.
\end{itemize}
Then \emph{(1)}$\Rightarrow$\emph{(2)}. If $M$ is projective, then \emph{(2)}$\Rightarrow$\emph{(1)} and \emph{(4)}$\Rightarrow$\emph{(2)}. If $M$ is finitely presented, then \emph{(3)}$\Rightarrow$\emph{(2)}. If $A$ is finitely generated for all $A\in \pazocal{A}$ and $M$ is projective, then \emph{(2)}$\Rightarrow$\emph{(3)}. If $M$ is $\pazocal{A}-\mathrm{coherent}$, then \emph{(2)}$\Rightarrow$\emph{(4)}.
\end{prop}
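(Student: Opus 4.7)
The plan is to proceed implication by implication, exploiting in each case the duality $(-)^{+}$ between $(S) M$-$\pazocal{A}I$ and $(S) M$-$\pazocal{A}F$ supplied by Lemmas~\ref{lem11} and~\ref{lem14}, together with dimension-shifting along the defining short exact sequence $0\to A\to M\to M/A\to 0$. For $(1)\Rightarrow(2)$, given a surjection $T\twoheadrightarrow T'$ with $T\in (S) M$-$\pazocal{A}I$ and a map $g:A\to T'$, I would lift $g$ through $T$ using projectivity of $A$, extend the lift to $M\to T$ using that $T$ is $M$-$\pazocal{A}I$, and then project back to $T'$; the strong version (with $M$ projective) is even cleaner, since $\mathrm{pd}(M/A)\leq 1$ gives $\mathrm{Ext}^{2}(M/A,-)=0$ and the long Ext sequence for $0\to K\to T\to T'\to 0$ immediately yields $\mathrm{Ext}^{1}(M/A,T')=0$. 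Conversely, $(2)\Rightarrow(1)$ with $M$ projective proceeds by embedding any module $K$ in an injective $E$, observing $E/K\in (S) M$-$\pazocal{A}I$ by (2), and extracting from the Ext long sequence $\mathrm{Ext}^{1}(M/A,E/K)\to\mathrm{Ext}^{2}(M/A,K)\to\mathrm{Ext}^{2}(M/A,E)$ that $\mathrm{Ext}^{2}(M/A,K)=0$; since $K$ is arbitrary, dimension-shifting (using $M$ projective) collapses $\mathrm{Ext}^{2}(M/A,-)$ onto $\mathrm{Ext}^{1}(A,-)$ and forces $A$ projective.

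For $(4)\Rightarrow(2)$ with $M$ projective, a monic $(S) M$-$\pazocal{A}I$ cover $f:F\to T'$ of a quotient $T'$ of $T\in (S) M$-$\pazocal{A}I$ must absorb the composite $T\twoheadrightarrow T'$, so $f$ is epic as well as monic, hence an isomorphism, and $T'\in(S) M$-$\pazocal{A}I$. For $(3)\Rightarrow(2)$ with $M$ finitely presented, dualizing $0\to K\to T\to T'\to 0$ via $(-)^{+}$ turns the quotient into the injection $T'^{+}\hookrightarrow T^{+}$; Lemma~\ref{lem14} places $T^{+}$ in $M$-$\pazocal{A}F$, (3) places $T'^{+}$ in $M$-$\pazocal{A}F$, Lemma~\ref{lem11} places $T'^{++}$ in $M$-$\pazocal{A}I$, and the pure embedding $T'\hookrightarrow T'^{++}$ together with Lemma~\ref{lem18} puts $T'$ itself in $M$-$\pazocal{A}I$. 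For $(2)\Rightarrow(3)$ with each $A\in\pazocal{A}$ finitely generated and $M$ projective, submodule-closure of $M$-$\pazocal{A}F$ follows by the same $(-)^{+}$-duality run in reverse, while $\pazocal{A}$-coherence is free once one reuses the $(2)\Rightarrow(1)$ argument: each $A$ is projective, and a finitely generated projective module is automatically finitely presented.

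Finally, $(2)\Rightarrow(4)$ with $M$ $\pazocal{A}$-coherent I would handle constructively. Let $F$ be the sum of all $(S) M$-$\pazocal{A}I$ submodules of a given $R$-module $N$; closure of the class under direct sums (the Remark following Lemma~\ref{lem18}, valid since $M$ is $\pazocal{A}$-coherent) and under homomorphic images (from (2)) puts $F$ itself in $(S) M$-$\pazocal{A}I$, and any $g:G\to N$ from the class has image a quotient of $G$, hence in $(S) M$-$\pazocal{A}I$ by (2) and therefore contained in $F$, so that $F\hookrightarrow N$ is the desired monic cover. The step I expect to be most subtle is the strong version of $(3)\Rightarrow(2)$: the dual long Tor sequence produces a $\mathrm{Tor}_{2}(B/C,M/A)$ term that is not obviously annihilated by (3) as literally phrased, and handling it cleanly likely requires either reading (3) with an implicit $(S)$ parenthesis or inserting an extra dimension-shift that exploits $M/A$ being finitely presented via Lemma~\ref{lem13}.
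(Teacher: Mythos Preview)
Your proposal is correct and broadly tracks the paper's proof: the arguments for $(1)\Rightarrow(2)$, $(2)\Rightarrow(1)$, and $(2)\Rightarrow(4)$ match the paper almost verbatim, including the sum-of-submodules construction for the cover. In two places you actually streamline the paper. For $(4)\Rightarrow(2)$ you apply the monic cover directly to the quotient $T'$ and factor the given surjection $T\twoheadrightarrow T'$ through it, forcing the cover to be onto and hence an isomorphism; the paper instead passes through the injective envelope $E$ of the kernel $C$, shows $E/C$ lies in the class by the same cover trick, and then finishes with two dimension-shifts through $\mathrm{Ext}^{2}(M/A,-)$, which is where the projectivity of $M$ is spent. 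For the $\pazocal{A}$-coherence half of $(2)\Rightarrow(3)$ you simply note that $(2)\Rightarrow(1)$ already makes each $A$ projective, and a finitely generated projective is finitely presented; the paper runs a separate argument via FP-injective test modules and the Enochs criterion from \cite{15}, which is correct but redundant once $(2)\Rightarrow(1)$ has been invoked anyway for the submodule-closure half. For $(3)\Rightarrow(2)$ the paper applies Lemma~\ref{lem14} in both directions rather than your route through $T'^{++}$ and Lemma~\ref{lem18}, but the content is the same.

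Your closing worry about a ``strong version of $(3)\Rightarrow(2)$'' is unnecessary: condition (3) is stated only for the plain class $M$-$\pazocal{A}F$, and the paper accordingly proves only the non-strong implication, so no $\mathrm{Tor}_{2}$ obstruction ever arises.
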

\begin{proof}
(1)$\Rightarrow$(2) Let $B$ be any $M-\pazocal{A}-\mathrm{injective}$ left $R-\mathrm{module}$ and $C$ be any submodule of $B$. Since $\Pi:B\rightarrow B/C$ is the canonical map and $A$ is projective, for any $f:A\rightarrow B/C$, there exits $g:A\rightarrow B$ such that $\Pi g=f$. So there exits $h:M\rightarrow B$ such that $hi=g$ where $i:A\rightarrow M$ is the inclusion. Then $(\Pi h)i=f$ and then (2) holds.\par
(2)$\Rightarrow$(1) Since $M$ is projective, $S M-\pazocal{A}I=M-\pazocal{A}I$. By the exact sequence $0\rightarrow N\rightarrow E\rightarrow E/N\rightarrow0$ where $N$ is any left $R-\mathrm{module}$ and $E$ is the injective envelope of $N$, we have the exact sequence $0=Ext^{1}_{R}(M/A,E/N)\rightarrow Ext^{2}_{R}(M/A,N)\rightarrow Ext^{2}_{R}(M/A,E)=0$ and then $Ext^{2}_{R}(M/A,N)=0$.\par
By the exact sequence $0\rightarrow A\rightarrow M\rightarrow M/A\rightarrow0$, we have that $0=Ext^{1}_{R}(M,N)\rightarrow Ext^{1}_{R}(A,N)\rightarrow Ext^{2}_{R}(M/A,N)=0$. So $Ext^{1}_{R}(A,N)=0$, and then $A$ is projective.\par
(3)$\Rightarrow$(2) Let $B$ be any $M-\pazocal{A}-\mathrm{injective}$ module and $C$ be a submodule of $B$. Then the exact sequence $0\rightarrow C\rightarrow B\rightarrow B/C\rightarrow0$ induces the exact sequence $0\rightarrow (B/C)^+\rightarrow B^+\rightarrow C^+\rightarrow0$. Since $B^+$ is $M-\pazocal{A}-\mathrm{flat}$ Lemma ~\ref{lem14} and so $(B/C)^+$ is $M-\pazocal{A}-\mathrm{flat}$. So $B/C$ is $M-\pazocal{A}-\mathrm{injective}$.\par
(2)$\Rightarrow$(3) Since (2)$\Rightarrow$(1), by Proposition ~\ref{prop3.2} every submodule of $M-\pazocal{A}-\mathrm{flat}$ module is $M-\pazocal{A}-\mathrm{flat}$. Let $N$ be an $FP-\mathrm{injective}$ left $R-\mathrm{module}$ and $E$ be the injective envelope of $N$. Then $E/N$ is $M-\pazocal{A}-\mathrm{injective}$. So we have the exact sequence $0=Ext^{1}_{R}(M/A,E/N)\rightarrow Ext^{2}_{R}(M/A,N)\rightarrow Ext^{2}_{R}(M/A,E)=0$ and so $Ext^{2}_{R}(M/A,N)=0$.\par
By the exact sequence $0\rightarrow A\rightarrow M\rightarrow M/A\rightarrow0$, we have the exact sequence $0=Ext^{1}_{R}(M,N)\rightarrow Ext^{1}_{R}(A,N)\rightarrow Ext^{2}_{R}(M/A,N)=0$, and so $Ext^{1}_{R}(A,N)=0$, by \cite{15} $A$ is finitely presented.\par
(2)$\Rightarrow$(4) Let $N$ be a left $R-\mathrm{module}$. Let $F=\sum\{x\leq N: x\in M-\pazocal{A}I\}$ and $G=\oplus\{x\leq N: x\in M-\pazocal{A}I\}$. Then there exists an exact sequence $0\rightarrow K\rightarrow G\rightarrow F\rightarrow0$ by \cite{16}. Note that $G\in M-\pazocal{A}I$, so $F\in M-\pazocal{A}I$ by (2).\par
Let $\psi: F^{1}\rightarrow N$ with $F^{1}\in M-\pazocal{A}I$ be any left $R-\mathrm{homomorphism}$. By (2), $\psi(F^{1})\leq F$. Let $\theta:F^1\rightarrow F$ with $\theta{x}=\psi{x}$ for $x\in F^1$. Then $i\theta=\psi$ and so $i:F\rightarrow N$ is an $M-\pazocal{A}I$ precover of $N$. Moreover the identity map $I_{F}$ of $F$ is the only homomorphism $g:F\rightarrow F$ such that $ig=i$, and so (4) holds.\par
(4)$\Rightarrow$(2) Let $B$ be any $M-\pazocal{A}-\mathrm{injective}$ module and $C$ be a submodule of $B$. If $E$ is injective envelope of $C$ and $\phi:F\rightarrow E/C$ is the monic $M-\pazocal{A}I$ cover, then we have the following diagram\par
\[\begin{diagram}
\node[4]{0} \arrow{s} \\
\node[4]{F} \arrow{s,t} {\phi} \\
\node{0} \arrow{e} \node{C} \arrow{e} \node{E} \arrow{ne,t,..} {\alpha} \arrow{e} \node{E/C} \arrow{e} \node{0}
\end{diagram}\]
Then $\phi$ is epic, so it is isomorphism. Therefore $E/C$ is $M-\pazocal{A}-\mathrm{injective}$. We have the exact sequence $0=Ext^{1}_{R}(M/A,E/C)\rightarrow Ext^{2}_{R}(M/A,C)\rightarrow Ext^{2}_{R}(M/A,E)=0$,  and so $Ext^{2}_{R}(M/A,C)=0$. By the exact sequence $0=Ext^{1}_{R}(M/A,B)\rightarrow Ext^{1}_{R}(M/A,B/C)\rightarrow Ext^{2}_{R}(M/A,C)=0$, and so $Ext^{1}_{R}(M/A,B/C)=0$, so (2) holds.
\end{proof}
Using Corollory \ref{cor16} we can give the following proposition:
\begin{prop} The following are equivalent:
\begin{itemize}
\item[(i)] $A$ is $M-\pazocal{A}-\mathrm{injective}$ for all $A\in \pazocal{A} $.
\item[(ii)] $A$ is a direct summand of $M$ for all $A\in \pazocal{A} $.
\item[(iii)] Every left $R-\mathrm{module}$ is $M-\pazocal{A}-\mathrm{injective}$.
\item[(iv)] Every right $R-\mathrm{module}$ is $M-\pazocal{A}-\mathrm{flat}$ where $M$ is finitely presented and $\pazocal{A}-\mathrm{coherent}$.
\end{itemize}
\end{prop}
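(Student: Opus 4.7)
The plan is to establish the cycle (i) $\Rightarrow$ (ii) $\Rightarrow$ (iii) $\Rightarrow$ (i) and then invoke Corollary \ref{cor16} to close the loop with (iv).

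For (i) $\Rightarrow$ (ii), fix $A\in\pazocal{A}$ and apply the defining property of $M-\pazocal{A}-\mathrm{injective}$ to the module $T=A$ itself: the exact sequence $0\to A\to M\to M/A\to 0$ yields that $Hom(M,A)\to Hom(A,A)\to 0$ is exact, so the identity $id_{A}$ lifts to a retraction $f\colon M\to A$. This splits the inclusion $A\hookrightarrow M$ and exhibits $A$ as a direct summand of $M$. For (ii) $\Rightarrow$ (iii), if $M=A\oplus B_{A}$ for each $A\in\pazocal{A}$, then for any left $R-$module $T$ every homomorphism $g\colon A\to T$ extends to $M$ by setting it to zero on $B_{A}$, so the surjectivity of $Hom(M,T)\to Hom(A,T)$ holds for every $T$; hence every left $R-$module lies in $M-\pazocal{A}I$. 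The implication (iii) $\Rightarrow$ (i) is immediate since each $A\in\pazocal{A}$ is itself a left $R-$module.

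Finally, (iii) $\Leftrightarrow$ (iv) is exactly the content of Corollary \ref{cor16}, for which the hypotheses ``$M$ finitely presented and $\pazocal{A}-\mathrm{coherent}$'' stated in (iv) are precisely what is required.

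I do not anticipate a serious obstacle: the step (i) $\Rightarrow$ (ii) is the only one where one has to make a careful choice of test module, and the choice $T=A$ is forced by the goal of splitting the inclusion. The remaining implications are either formal consequences of direct-summand decompositions or a direct appeal to the earlier corollary.
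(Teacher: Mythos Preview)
Your proof is correct and follows essentially the same approach as the paper: the paper states this proposition without a detailed proof, merely prefacing it with ``Using Corollary~\ref{cor16} we can give the following proposition,'' so the only ingredient it cites is Corollary~\ref{cor16} for (iii)$\Leftrightarrow$(iv), leaving the cycle (i)$\Leftrightarrow$(ii)$\Leftrightarrow$(iii) as implicit. You have simply spelled out those elementary implications (lifting $id_A$ to split the inclusion, extending by zero on the complement, and the trivial specialisation), which is exactly what the paper intends the reader to supply.
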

\begin{thm}The following are equivalent:
\begin{itemize}
\item[(i)]Every cotorsion left $R-\mathrm{module}$ is in $SM-\pazocal{A}I$.
\item[(ii)] Every pure injective left $R-\mathrm{module}$ is in $SM-\pazocal{A}I$.
\item[(iii)]$M/A$ is flat for all $A\in \pazocal{A} $.
\item[(iv)]Every right $R-\mathrm{module}$ is in $SM-\pazocal{A}F$.
\item[(v)]$Tor_{1}^{R}(R/I,M/A)=0$ where $I$ is any right ideal of $R$ and for all $A\in \pazocal{A} $.
\item[(vi)]$A\cap IM=IA$ for all rigth ideal $I$ of $R$ where $M$ is flat.
\end{itemize}

\end{thm}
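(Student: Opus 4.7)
The plan is to arrange the six conditions in a cycle that keeps each implication short, pushing the only real computation into the equivalence (v)$\Leftrightarrow$(vi). I would organize the proof as
\[
\text{(i)}\Rightarrow\text{(ii)}\Rightarrow\text{(iv)}\Rightarrow\text{(iii)}\Leftrightarrow\text{(v)}\Leftrightarrow\text{(vi)}\Rightarrow\text{(iii)}\Rightarrow\text{(i)},
\]
closing the loop through (iii). The first two implications are almost free: every pure injective module is cotorsion (since $\mathrm{Ext}^1(F,-)=0$ for any flat $F$ detects nothing about pure exact sequences beyond what pure injectivity already gives), and for any right $R$-module $N$ the character module $N^+$ is pure injective, so by Lemma~\ref{lem11} applying (ii) to $N^+$ gives $N\in SM-\pazocal{A}F$.

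For (iv)$\Rightarrow$(iii), the class $SM-\pazocal{A}F$ is defined by the vanishing of $\mathrm{Tor}_1^R(-,M/A)$ for all $A\in\pazocal{A}$, so if every right module is in $SM-\pazocal{A}F$ then $\mathrm{Tor}_1^R(N,M/A)=0$ for every $N$ and every $A\in\pazocal{A}$, i.e. $M/A$ is flat. The equivalence (iii)$\Leftrightarrow$(v) is the standard criterion that flatness of $M/A$ can be tested on the cyclic modules $R/I$. The step (iii)$\Rightarrow$(i) is again structural: if $M/A$ is flat and $N$ is cotorsion, then $\mathrm{Ext}^1_R(M/A,N)=0$ by the definition of cotorsion, hence $N\in SM-\pazocal{A}I$.

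The one calculation I would actually carry out is (v)$\Leftrightarrow$(vi). Here I would use that $M$ is flat, so the short exact sequence $0\to A\to M\to M/A\to 0$ tensored with $R/I$ yields
\[
0\to \mathrm{Tor}_1^R(R/I,M/A)\to A/IA\to M/IM\to (M/A)/I(M/A)\to 0,
\]
after identifying $R/I\otimes_R X\cong X/IX$. Computing the kernel of the middle map directly identifies $\mathrm{Tor}_1^R(R/I,M/A)\cong (A\cap IM)/IA$, and then (v)$\Leftrightarrow$(vi) is immediate for every right ideal $I$. This is the only place requiring care about whether the ambient module $M$ is flat, which is why (vi) is stated with that hypothesis. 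Everything else in the cycle only uses the earlier results of the paper (Lemma~\ref{lem11} in particular) and the standard cotorsion/pure injective vocabulary, so I expect the Tor identification to be the main—but quite manageable—obstacle.
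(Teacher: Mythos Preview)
Your proposal is correct and matches the paper's argument: the paper likewise treats (i)$\Rightarrow$(ii), (iii)$\Rightarrow$(i), and (iii)$\Leftrightarrow$(iv)$\Leftrightarrow$(v) as trivial, proves (ii)$\Rightarrow$(iii) via the pure injectivity of $N^+$ and the duality $\mathrm{Ext}^1_R(M/A,N^+)\cong\mathrm{Tor}_1^R(N,M/A)^+$ (your route through (iv) via Lemma~\ref{lem11} is the same idea), and identifies $\mathrm{Tor}_1^R(R/I,M/A)\cong (A\cap IM)/IA$ for (v)$\Leftrightarrow$(vi) exactly as you do.
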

\begin{proof}(i)$\Rightarrow$(ii), (iii)$\Rightarrow$(i), (iii)$\Leftrightarrow$(iv)$\Leftrightarrow$ (v) are trivial.\\
(ii)$\Rightarrow$(iii) Let $N$be any rigth $R-\mathrm{module}$. Then $N^+$ is pure injective. Since $0=Ext^{1}_{R}(M/A,N^+)\cong Hom_{\mathbb {Z}}(Tor_{1}^{R}(N,M/A),\mathbb{Q}/\mathbb{Z}),\\Tor_{1}^{R}(N,M/A)=0$, so $M/A$ is flat.\\
(v)$\Leftrightarrow$(vi) We have the following commutative diagram:
\[\begin{diagram}
\node{0} \arrow{e} \node{Tor_{1}^{R}(R/I,M/A)} \arrow{e} \arrow{s,t}{\alpha} \node{I\otimes M/A} \arrow{e} \arrow{s,t}{\beta} \node{R\otimes M/A}\arrow{s,t}{\gamma} \\
\node{0} \arrow{e} \node{\frac{IM\cap A}{IA}} \arrow{e} \node{IM/IA} \arrow{e} \node{M/A}
\end{diagram}\]\par
Since $\beta$ and $\gamma$ are isomorphisms, so is $\alpha$.
\end{proof}

In the following proposition it is also satisfied all equivalent conditions in Theorem 3.5.
\begin{prop} Let $R$ be in $(S) M-\pazocal{A}I$. If $M$ is in $P\pazocal{A}$ and $M$ is flat, then $M/A$ is flat for all $A\in \pazocal{A} $.
\end{prop}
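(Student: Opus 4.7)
My plan is to verify condition (vi) of Theorem 3.5, namely $A\cap IM = IA$ for every right ideal $I$ of $R$ and every $A\in\pazocal{A}$; combined with the hypothesis that $M$ is flat, this will yield condition (iii), i.e.\ that $M/A$ is flat, via the equivalences in that theorem. The inclusion $IA\subseteq A\cap IM$ is automatic, so the task reduces to establishing the reverse inclusion $A\cap IM\subseteq IA$.

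To obtain this inclusion, I would invoke the dual basis lemma for the projective left $R$-module $A$, which is projective by the hypothesis $M\in P\pazocal{A}$: there exist families $\{x_i\}_{i\in\Lambda}\subseteq A$ and $\{f_i\}_{i\in\Lambda}\subseteq\mathrm{Hom}_R(A,R)$ such that for each $a\in A$, almost all $f_i(a)$ vanish and $a=\sum_i f_i(a)\,x_i$. The hypothesis $R\in(S)M-\pazocal{A}I$ entails $R\in M-\pazocal{A}I$, so each $f_i$ extends to a homomorphism $g_i\colon M\to R$. For any $a\in A\cap IM$, writing $a=\sum_{k=1}^{n}r_k m_k$ with $r_k\in I$ and $m_k\in M$ gives
$$f_i(a) \;=\; g_i(a) \;=\; \sum_{k=1}^{n} r_k\,g_i(m_k) \;\in\; I$$
for every $i$, and therefore $a=\sum_i f_i(a)\,x_i\in IA$.

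The main step to watch is the interplay between the dual basis functionals on $A$ and their extensions to $M$: the extension property coming from $R\in M-\pazocal{A}I$ is precisely what converts the expression $a=\sum_k r_k m_k$ living in $M$ into an expression for $f_i(a)$ as an $R$-linear combination of elements of $I$. Once this is verified, Theorem 3.5 does all the remaining work to conclude that $M/A$ is flat, and no further obstacle is anticipated.
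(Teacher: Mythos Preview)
Your argument is correct and essentially matches the paper's: both invoke the dual basis lemma for the projective module $A$ and use $R\in M-\pazocal{A}I$ to extend the coordinate functionals $f_i\colon A\to R$ to maps $g_i\colon M\to R$, thereby rewriting an element of $A$ expressed via $M$ back in terms of $A$. The only cosmetic difference is the packaging of the conclusion: you verify condition~(vi) of Theorem~3.5 ($A\cap IM=IA$) and quote that theorem, whereas the paper runs the same computation with finitely many $a_j$'s simultaneously to obtain the full purity criterion (Lam, Theorem~4.89), concluding that $A$ is pure in the flat module $M$ and hence $M/A$ is flat.
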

\begin{proof} Let us given that $a_j \in A$, $m_i\in M$ and $s_{ij}\in R$ such that $a_j=\sum\limits_{i=1}^{m} s_{ij}m_i$ where $1\leq j\leq n$. By the Dual Basis Lemma, there exist $\{f_k\;:\;k\in I\}\subseteq Hom(A,R)$ such that for any $c\in A$ $f_k(c)=0$ for almost all $k$, and $c=\sum f_k(c)c_k$. Since $R$ is in $(S) M-\pazocal{A}I$, there exist $g_k\in Hom(M,R)$ such that $f_k(a_j)=g_k(a_j)=g_k(\sum\limits_{i=1}^{m} s_{ij}m_i)=\sum s_{ij}g_k(m_i)$. So $a_j=\sum f_k(a_j)c_k=\sum s_{ij}(\sum g_k(m_i)c_k)$. So $A$ is pure submodule of $M$ by Theorem 4.89 in \cite{17}.

\end{proof}
\begin{rem}Let $\pazocal{A}$ be any family of some left $R-$ modules. Then A left $R-$module $T$ is called a strongly $\pazocal{A}-\mathrm{injective}(flat)$ if for every exact sequence $0\rightarrow X\rightarrow Y\rightarrow Y/X\rightarrow0$  where $Y/X\cong S$ for some $S\in\pazocal{A}$, we have an exact sequence $Hom(Y,T)\rightarrow Hom(X,T)\rightarrow0$ ($(T\otimes Y)^{+}\rightarrow(T\otimes X)^{+}\rightarrow0$), or equivalently $Ext^{1}_{R}(S,T)=0$ ($Tor_{1}^{R}(T,S)=0$) for all $S\in\pazocal{A}$. The class of strongly $\pazocal{A}-\mathrm{injective}(flat)$ modules is denoted by $S-\pazocal{A}I(F)$ and hence $S-\pazocal{A}I=\pazocal{A}^{\perp}$ and $S-\pazocal{A}F=^{\top}\pazocal{A}$ (see for notations \cite{22}). In this case $SM-\pazocal{A}I(F)$ is a special case of $S-\pazocal{A}I(F)$.

From the proof above we see the following:
\begin{itemize}
\item[(i)] $S-\pazocal{A}I(F)$ is closed under extensions, direct summand and direct product ( extensions, direct summand, direct sum and direct limit).
\item[(ii)] A right  $R-$ module $N$ is in $S-\pazocal{A}F$ if and only if $N^+$ is in $S-\pazocal{A}I$.
\item[(iii)] Let the modules in $\pazocal{A}$ be finitely presented then left $R-$ module $N$ is in $S-\pazocal{A}I$ 
if and only if $N^+$ is in $S-\pazocal{A}F$.
\item[(iv)] A simple module over commutative ring is in $S-\pazocal{A}F$ if and only if it is in $S-\pazocal{A}I$.
\item[(v)] $S-\pazocal{A}F(I)$ is closed under pure submodule and pure quotient module and by Lemma 5.3.12 in \cite{8} it is a Kaplansky class  (where the modules in $\pazocal{A}$ are finitely presented ).
\item[(vi)] $S-\pazocal{A}I$ is preenveloping  and covering class where the modules in $\pazocal{A}$ are finitely presented.
\item[(vii)] $(S-\pazocal{A}F(I),S-\pazocal{A}F(I)^{\perp})$ is a perfect cotorsion theory (where the modules in $\pazocal{A}$ be finitely presented  and $R$ is in $S-\pazocal{A}I$). 
\item[(viii)] $(^\perp(S-\pazocal{A}I),S-\pazocal{A}I)$ is a complete cotorsion theory.
\item[(ix)] $S-\pazocal{A}I$ is closed under direct sum and direct limit if the modules of $\pazocal{A}$ are finitely presented.
\item[(x)] $S-\pazocal{A}F$ is closed under direct product if the modules of $\pazocal{A}$ are finitely presented and $R$ is a coherent ring.
\item[(xi)] $S-\pazocal{A}F$ is closed under direct product if and only if $S-\pazocal{A}F$ is preenveloping class.

\end{itemize} 
\end{rem}

\end{document}